\theoremstyle{plain}
\newtheorem{thm}{Theorem}
\newtheorem*{thm*}{Theorem A'}
\newcommand\sO{{\mathcal O}}
\title[Derived Category of Enriques Surfaces]
{A Note on the Derived Category of Enriques Surfaces in Characteristic 2}
\author{Sofia Tirabassi}
\address{Department of Mathematics\\ University of Bergen, All\'egaten 41, Bergen, Norway}
\email{\url{sofia.tirabassi@uib.no}}
\keywords{Derived Category, Enriques Surfaces, Positive Characteristic}
\subjclass[2010]{Primary 14F05 Secondary 14J20, 14F05}
\begin{document}
 \maketitle
\begin{abstract}
 We show that the (twisted) derived category ``recognizes'' the three different kinds of Enriques surfaces in characteristic 2.
\end{abstract}
\section{Introduction}
This note concerns the study of derived categories of Enriques surfaces defined over an algebraically closed field $k$ of characteristic 2.\par
Derived categories were introduced by Verdier in the late 60s, with the aim to extend known theorems in algebraic geometry to a relative setting. Thanks to the work of Bondal, Orlov, Kawamata, Bridgeland, Katzarkov, Bayer--Macr\`i, Bayer--Macr\`i--Toda, and many others, it is nowadays understood that they provide a versatile and powerful tool for better understanding the geometry of algebraic varieties. However, the great majority of the techniques employed in the aforementioned studies are tailored for complex varieties (or more generally for varieties over an algebraically closed field of characteristic 0), and the research on the more algebraic setting of varieties defined over (possibly finite) fields of positive characteristic is still, in many aspects, at an embryonic stage.\par
In this work we concentrate on Enriques surfaces over fields of characteristic 2. By the Bombieri--Mumford classification of algebraic surfaces (\cite{BM3}), an \emph{Enriques surface} $X$ is a smooth projective surface with numerically trivial canonical bundle, second Betti number equal to 10 and Euler characteristic 1. Over fields of characterstic two they are split into three kinds: \emph{classical}, which have a torsion canonical bundle of order 2, \emph{non-classical ordinary}, whose canonical bundle is trivial and the whose Frobenius morphism acts isomorphically on $H^1(X,\mathcal{O}_X)$, and \emph{non-classical supersingular}, which have again a trivial canonical bundle and are such that their Frobenius induces the zero-morphism on $H^1(X,\sO_X)$. In both these latter two cases we have that $H^1(X,\sO_X)$ is a 1-dimensional vector space. The central result of this note is the following:

\begin{thm}\label{Main}
 Let $X$ be an Enriques surface defined over an algebraically closed field $k$ of characteristic 2. If $Y$ is is a smooth $k$-scheme with the same derived category of $X$, then $Y$ is an Enriques surfaces of the same kind as $X$.
\end{thm}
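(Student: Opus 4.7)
The plan has two stages: first, show that any smooth derived partner $Y$ of $X$ must itself be an Enriques surface; second, show that it is of the same kind. For the first stage, by the positive-characteristic version of Orlov's representability theorem (due to Ballard), the equivalence $D^b(X)\simeq D^b(Y)$ is a Fourier--Mukai transform, so the standard list of derived invariants applies: $\dim Y = 2$, the Serre functor is preserved (hence $K_Y$ is a torsion line bundle of the same order as $K_X$, and in particular numerically trivial), and the Betti and coherent Euler characteristics match ($b_2(Y)=10$, $\chi(Y,\sO_Y)=1$). Projectivity of $Y$ can be obtained by transporting a polarization through the kernel or by a Matsusaka--Mumford type argument. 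The Bombieri--Mumford classification then forces $Y$ to be an Enriques surface.

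For the second stage, the classical/non-classical dichotomy is already detected by the order of the canonical sheaf, which is encoded in the Serre functor: the smallest $n\geq 1$ such that $S_X^n\simeq [2n]$ equals the order of $K_X$ in $\mathrm{Pic}(X)$. Since the Serre functor is transported along a derived equivalence, $K_X$ and $K_Y$ have the same order, so $X$ is classical if and only if $Y$ is.

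The main obstacle is separating the two non-classical types, where $K_X=\sO_X$, $K_Y=\sO_Y$, and the usual cohomological invariants of $X$ and $Y$ all agree. The fine difference lies in the group-scheme structure of $\mathrm{Pic}^\tau_X$: it is $\mu_2$ in the ordinary case and $\alpha_2$ in the supersingular case, equivalently in whether Frobenius acts as an isomorphism or as zero on $H^1(X,\sO_X)$. The strategy signalled by the word ``twisted'' in the abstract is to access this through twisted Fourier--Mukai theory: the nontrivial class in $\mathrm{Pic}^\tau_X$ yields a natural gerbe whose band is $\mu_2$ or $\alpha_2$ according to the type, and one would show that the pair consisting of $X$ and this gerbe is recovered from the derived data (or some twisted enhancement thereof) of $X$, so that the band is preserved across the equivalence. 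Alternatively, one can try to establish that the action of Frobenius on Hochschild homology, or on the crystalline cohomology $H^*_{\mathrm{cris}}(X/W(k))$, is a derived invariant, since its slopes in degree $1$ also separate the two kinds.

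The hardest step is this last one: Frobenius is not \emph{a priori} visible to $D^b(X)$, so the argument must locate an intrinsically derived-categorical shadow of the Frobenius action on $H^1(\sO_X)$. The natural route through twisted derived categories, combined with the fact that $\mu_2$ and $\alpha_2$ are Cartier dual only to non-isomorphic group schemes in characteristic two, should provide exactly the needed rigidity to complete the proof.
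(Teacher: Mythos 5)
Your first two stages match the paper: derived invariance of dimension, of the order of the canonical bundle (via the Serre functor), and of $b_2$ (the paper cites Honigs' result that the equivalence induces an isomorphism on even-degree $\ell$-adic cohomology with Tate twists), followed by the Bombieri--Mumford classification, give that $Y$ is an Enriques surface and that the classical/non-classical dichotomy is preserved. That part is sound and is essentially the paper's argument.

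The genuine gap is exactly where you locate it: separating ordinary from supersingular non-classical surfaces. At that point your text stops proving and starts listing candidate strategies (``one would show\dots'', ``one can try to establish\dots'', ``should provide\dots''), none of which is carried out; the Frobenius action on $H^1(X,\sO_X)$ is indeed not visibly a derived invariant, and neither of your proposed routes (a gerbe banded by $\mu_2$ versus $\alpha_2$ recovered from twisted derived data, or derived invariance of the Frobenius on crystalline/Hochschild homology) is established or even reduced to a known theorem. You have also misread the word ``twisted'' in the abstract: it refers to a separate variant of the theorem about twisted Fourier--Mukai partners, not to the mechanism for this step. The missing idea is Rouquier's theorem: a derived equivalence induces an isomorphism of group schemes $\mathrm{Pic}^0(X)\rtimes \mathrm{Aut}^0(X)\simeq \mathrm{Pic}^0(Y)\rtimes \mathrm{Aut}^0(Y)$. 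Passing to tangent spaces at the identity gives
$$H^1(X,\mathcal{O}_X)\oplus H^0(X,\mathcal{T}_X)\;\simeq\; H^1(Y,\mathcal{O}_Y)\oplus H^0(Y,\mathcal{T}_Y),$$
and since $h^1(\mathcal{O})=1$ for every non-classical Enriques surface, this forces $h^0(X,\mathcal{T}_X)=h^0(Y,\mathcal{T}_Y)$; by Liedtke's computation this number distinguishes the ordinary from the supersingular case. So one never needs to see Frobenius itself--- the numerical invariant $h^0(\mathcal{T})$ suffices, and it is reached through the group-scheme (not merely cohomological) content of the equivalence. Your instinct that the answer lies in the infinitesimal group scheme $\mathrm{Pic}^\tau$ ($\mu_2$ versus $\alpha_2$) is pointing at the right object, but without Rouquier's result you have no way to transport it across the equivalence.
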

The main characters of our proof are the invariance under derived equivalence of the order of the canonical bundle and the Roquier isomorphism (\cite{RO2011}). Using the same ingredients, we also prove a twisted variant  of Theorem \ref{Main}.
\subsection*{Notation and Terminology}
Given a smooth projective variety $Z$, we will denote by $\mathbf{D}^b(Z)$ its bounded derived category of coherent sheaves. The symbol $\mathcal{T}_Z$ will stand for the tangent bundle of $Z$. We say that two varieties $Z_1$ and $Z_2$ are \emph{derived equivalent} or \emph{Fourier--Mukai partners} if there exists an exact equivalence $\Phi:\mathbf{D}^b(Z_1)\rightarrow\mathbf{D}^b(Z_2)$.
\subsection*{Acknowledgements}I would like to thank Prof. C. Liedtke for asking me the question which this note answers at a conference in Berlin. I am also grateful to the unknown referee for many suggestions and improvements. It was him (or her) who encouraged me to pursue the variant of Theorem \ref{Main} proposed in the last section. Finally, I am indebted to K. Honigs for very interesting mathematical conversation and for pointing me the difficulties of using the HKR isomorphism in characteristic 2.
\section{The Proof}
We first prove that $Y$ is an Enriques surface by using a similar argument to the one in \cite{HLT}*{Theorem 3.3}. The key ingredients are the invariance under derived equivalence of the order of the canonical bundle (see \cite{Hu}*{Proposition 4.1}) and the isomorphisms that Fourier--Mukai transforms induce at cohomological level which, imply that derived equivalent surface have the same Betti numbers.\\
By standard results on derived invariants, we know that $Y$ is a surface with numerically trivial canonical bundle. In particular it is a minimal surface of Kodaira dimension 0. Denote by $\Phi:\mathbf{D}^b(X)\rightarrow \mathbf{D}^b(Y)$ an exact equivalence. Let $l$ be an odd prime. By \cite{Ho}*{Lemma 3.1} the equivalence $\Phi$ descends to cohomology and yields and isomorphism
 $$H^0_{\acute{e}t} (X,\mathbf{Q}_l) \; \oplus \; H^2_{\acute{e}t} (X,\mathbf{Q}_l)(1)\oplus \; H^4_{\acute{e}t} (X,\mathbf{Q}_l)(2) \; \simeq \; H^0_{\acute{e}t} (Y,\mathbf{Q}_l)
 \; \oplus\;  H^2_{\acute{e}t} (Y,\mathbf{Q}_l)(1)\oplus \; H^4_{\acute{e}t} (Y,\mathbf{Q}_l)(2),$$
 where $(i)$ denotes the Tate twist. After computing dimensions, the above isomorphism  leads to the equality of Betti numbers:
 $$b_2(X) \;= \;b_2(Y) \; = \; 10.$$
 So we conclude that $Y$ is an Enriques surfaces by applying the Bombieri--Mumford classification theorem (see for example \cite{Li}*{Paragraph 7}).\par
 Now we assume that $X$ is a classical Enriques surface. Then its canonical bundle has order 2. As remarked before, since derived equivalences commute with Serre functors, also the canonical bundle of $Y$ will have order 2, and therefore $Y$ is a classical Enriques surface.\par
 Suppose conversely that the canonical bundle of $X$ (and so the one of $Y$) is trivial. The table on page 6 of \cite{Li2} tells us that one can distinguish non-classical ordinary and supersingular  Enriques surfaces by looking at the cohomology of their tangent bundle. Therefore, in order to conclude the proof, we need just to show that $H^0(X,\mathcal{T}_X)\simeq H^0(Y,\mathcal{T}_Y)$. This is a consequence of the Roquier isomorphism. More precisely, in \cite {RO2011} the author proves that the exact equivalence $\Phi$ induces an isomorphism of algebriaic groups
 \begin{equation}\label{Roquier}
  F_\Phi:\mathrm{Pic}^0(X)\rtimes \mathrm{Aut}^0(X)\rightarrow \mathrm{Pic}^0(Y)\rtimes \mathrm{Aut}^0(Y),
 \end{equation}
which in turns induces an isomorphism of their tangent spaces at the origin. Now, it is well known that, for any smooth projective variety $Z$, the tangent space in $\mathcal{O}_Z$ of $\mathrm{Pic}^0(Z)$ is isomorphic to $H^1(Z,\mathcal{O}_Z)$. On the other side there is an isomorphism of the tangent space at the origin of $\mathrm{Aut}^0(Z)$ with $H^0(Z,\mathcal{T}_Z)$ (cfr. \cite{MO1967}). Thus, since as scheme, the semi-direct product is simply the product, \eqref{Roquier} yields an isomorphism
$$H^1(X,\mathcal{O}_X)\oplus H^0(X,\mathcal{T}_X)\simeq H^1(Y,\mathcal{O}_Y)\oplus H^0(Y,\mathcal{T}_Y).$$ 
The statement is proved by taking dimension on both sides, since both $X$ and $Y$ are non-classical Enriques surfaces, and so  both $H^1(X,\mathcal{O}_X)$ and $H^1(Y,\mathcal{O}_Y)$ are one dimensional.\qed
\section{Variant: Twisted Fourier--Mukai Partners of Enriques Surfaces}
In this section we prove the following variant to Theorem \ref{Main}, involving the twisted derived categories introduced by C\u{a}ld\u{a}raru in his thesis \cite{CalThesis}.
\begin{thm*}\label{variant}
 Let $X_1$ and $X_2$ be Enriques surfaces over an algebraically closed field of characteristic 2, and consider  two Brauer classes, $\alpha_1$ and $\alpha_2$, with $\alpha_i\in \mathrm{Br}(X_i)$. If there is an exact equivalence of the \emph{twisted} bounded  derived categories
 $$\Phi:\mathbf{D}^b(X_1,\alpha_1)\rightarrow \mathbf{D}^b(X_2,\alpha_2),$$
 then $X_1$ and $X_2$ are of the same kind.
\end{thm*}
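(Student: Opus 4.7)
The plan is to mimic the argument of Theorem~\ref{Main} step by step, substituting each untwisted derived invariant by its twisted analogue. Since $X_1$ and $X_2$ are already Enriques surfaces by hypothesis, the first half of the original argument (recognising the Enriques property via Betti numbers) can be skipped, and one is only left with the task of separating the three kinds.

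For the dichotomy classical versus non-classical, the relevant invariant is the order of the canonical bundle. The Serre functor of a twisted derived category $\mathbf{D}^b(X_i,\alpha_i)$ is still given by tensoring with $\omega_{X_i}[2]$, because tensoring by an untwisted line bundle is well-defined on $\alpha_i$-twisted sheaves. Exact equivalences commute with Serre functors, so $\Phi$ conjugates the autoequivalence $-\otimes\omega_{X_1}$ to $-\otimes\omega_{X_2}$; taking orders shows that $X_1$ is classical if and only if $X_2$ is.

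If instead $X_1$ and $X_2$ are both non-classical, one needs a twisted analogue of Rouquier's theorem~\cite{RO2011} yielding
\[
\mathrm{Pic}^0(X_1)\rtimes\mathrm{Aut}^0(X_1)\;\simeq\;\mathrm{Pic}^0(X_2)\rtimes\mathrm{Aut}^0(X_2)
\]
as algebraic groups. Passing to tangent spaces at the identity, and using that $H^1(X_i,\sO_{X_i})$ is one-dimensional in the non-classical case, one concludes that $h^0(X_1,\mathcal{T}_{X_1})=h^0(X_2,\mathcal{T}_{X_2})$, which by the table in Liedtke's survey cited in the proof of Theorem~\ref{Main} distinguishes ordinary from supersingular.

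The main technical point is thus the extension of Rouquier's theorem to the twisted setting. Formally, both $\mathrm{Pic}^0(X_i)$ and $\mathrm{Aut}^0(X_i)$ still embed into the group of autoequivalences of $\mathbf{D}^b(X_i,\alpha_i)$: the action of the connected scheme $\mathrm{Aut}^0(X_i)$ on the (discrete, torsion) Brauer group $\mathrm{Br}(X_i)$ must be trivial, and tensoring with a numerically trivial untwisted line bundle commutes with the $\alpha_i$-twist. Rouquier's original argument identifies the connected component of the group of Fourier--Mukai autoequivalences by a formal Hochschild-theoretic computation, and one expects it to carry over verbatim after replacing the kernels with $\alpha_1^{-1}\boxtimes\alpha_2$-twisted ones. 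Making this rigorous, and in particular checking that $\Phi$ matches the two connected components of autoequivalences as algebraic groups, is the crux of the argument; everything else then reduces to the same tangent-space bookkeeping used in Theorem~\ref{Main}.
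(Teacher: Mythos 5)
Your proposal has a genuine gap at exactly the point you flag as the ``crux'': the twisted analogue of Rouquier's theorem is asserted as something one ``expects to carry over verbatim,'' but it is never proved, and no reference is given for it. As written, the case where both surfaces are non-classical (the only case where Rouquier's theorem is actually needed, to separate ordinary from supersingular) therefore rests on an unestablished result. The observation that collapses this difficulty --- and which your proposal misses --- is that non-classical Enriques surfaces in characteristic $2$ have \emph{trivial} Brauer group, while classical ones have Brauer group $\mathbf{Z}/2\mathbf{Z}$ (\cite{CD2012}*{Proposition 5.3.5}). Hence if both $X_1$ and $X_2$ are non-classical, the twists $\alpha_i$ are forced to be trivial, the equivalence is an untwisted one, and Theorem \ref{Main} applies verbatim; no twisted Rouquier theorem is required anywhere.

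Your first step also deserves more care. For the classical/non-classical dichotomy you argue that the order of $\omega_{X_i}$ is read off from the order of the Serre functor of $\mathbf{D}^b(X_i,\alpha_i)$. While the Serre functor is indeed $-\otimes\omega_{X_i}[2]$ and is intertwined by $\Phi$, concluding that the order of the \emph{functor} $-\otimes\omega_{X_i}$ equals the order of the \emph{line bundle} $\omega_{X_i}$ requires knowing that $-\otimes L\simeq \mathrm{id}$ on the twisted category forces $L\simeq\sO$; in the untwisted case one tests against $\sO_X$, which is not an object of $\mathbf{D}^b(X,\alpha)$ when $\alpha$ is nontrivial, so an additional argument (e.g.\ uniqueness of twisted Fourier--Mukai kernels) is needed. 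The paper avoids this as well: after the Brauer-group reduction the only case left to exclude is an equivalence $\mathbf{D}^b(X_1,\alpha_1)\simeq\mathbf{D}^b(X_2)$ with $X_1$ classical, $\alpha_1$ the nontrivial class, and $X_2$ non-classical, and this is ruled out because $h^0(\omega)$ is invariant under twisted derived equivalence (\cite{Nauer1}*{Theorem 1.6.15}, \cite{Nauer2}*{Theorem 22}), giving $0\simeq H^0(X_1,\omega_{X_1})\simeq H^0(X_2,\omega_{X_2})\neq 0$, a contradiction. In short: your architecture would work if the twisted Rouquier theorem were available, but the paper's route shows it can be bypassed entirely with the Brauer-group computation plus one known twisted invariant.
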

\begin{proof}[of Theorem \ref{variant}]
 We first remark that non-classical Enriques surfaces have a trivial Brauer group (\cite{CD2012}*{Proposition 5.3.5}). So if $X_1$ and $X_2$ are both non-classical then the statement can be deduced directly by Theorem \ref{Main}. Thus we can suppose that $X_1$ is classical. Then its Brauer group is isomorphic to $\mathbf{Z}/2\mathbf{Z}$ (\cite{CD2012}). In order to prove the theorem we have just to show that there cannot be an exact equivalence 
 $$\Phi:\mathbf{D}^b(X_1,\alpha_1)\rightarrow \mathbf{D}^b(X_2),$$
 where $\alpha_1$ is the only non-trivial class of $\mathrm{Br}(X_1)$ and $X_2$ is a non-classical Enriques surface. But, if such an equivalence should exist, by \cite{Nauer1}*{Theorem 1.6.15} or \cite{Nauer2}*{Theorem 22} we would have that
 $$0\simeq H^0(X_1,\omega_{X_1})\simeq H^0(X_2,\omega_{X_2}),$$
 which yields an obvious contradiction.
\end{proof}
\begin{bibdiv}
\begin{biblist}

\bib{BM3}{article}{
      author={Bombieri, Enrico},
      author={Mumford, David},
       title={Enriques' classification of surfaces in char. p, {III}},
        date={1976},
     journal={Inventiones mathematicae},
      volume={35},
      number={1},
       pages={197\ndash 232},
}

\bib{CalThesis}{thesis}{
      author={C{\u{a}}ld{\u{a}}raru, Andrei},
       title={Derived categories of twisted sheaves on {C}alabi-{Y}au
  manifolds},
        type={Ph.D. Thesis},
        date={2000},
}

\bib{CD2012}{book}{
      author={Cossec, Fran{\c{c}}ois},
      author={Dolgachev, Igor},
       title={Enriques surfaces i},
   publisher={Springer Science \& Business Media},
        date={2012},
      volume={76},
}

\bib{Ho}{article}{
      author={Honigs, Katrina},
       title={Derived equivalent surfaces and abelian varieties, and their zeta
  functions},
        date={2015},
     journal={Proceedings of the American Mathematical Society},
      volume={143},
      number={10},
       pages={4161\ndash 4166},
}

\bib{HLT}{unpublished}{
      author={Honigs, Katrina},
      author={Lombardi, Luigi},
      author={Tirabassi, Sofia},
       title={Derived equivalences of canonical covers of hyperelliptic and
  {Enriques} surfaces in positive characteristic},
        date={2016},
        note={Preprint arXiv:1606.02094},
}

\bib{Hu}{book}{
      author={Huybrechts, Daniel},
       title={{Fourier-Mukai} transforms in algebraic geometry},
   publisher={Oxford University Press on Demand},
        date={2006},
}

\bib{Li}{incollection}{
      author={Liedtke, Christian},
       title={Algebraic surfaces in positive characteristic},
        date={2013},
   booktitle={Birational geometry, rational curves, and arithmetic},
   publisher={Springer},
       pages={229\ndash 292},
}

\bib{Li2}{article}{
      author={Liedtke, Christian},
       title={Arithmetic moduli and lifting of {Enriques} surfaces},
        date={2015},
     journal={Journal f{\"u}r die reine und angewandte Mathematik (Crelles
  Journal)},
      volume={2015},
      number={706},
       pages={35\ndash 65},
}

\bib{Nauer1}{thesis}{
      author={Mart{\i}nez, Hermes},
       title={Fourier-mukai transform for twisted sheaves},
        type={Ph.D. Thesis},
        date={2010},
}

\bib{Nauer2}{article}{
      author={Mart{\i}nez, Hermes},
       title={Fourier-mukai transform for twisted derived categories of
  surfaces},
        date={2012},
     journal={Revista Colombiana de Matem{\'a}ticas},
      volume={46},
      number={2},
       pages={205\ndash 228},
}

\bib{MO1967}{article}{
      author={Matsumura, Hideyuki},
      author={Oort, Frans},
       title={Representability of group functors, and automorphisms of
  algebraic schemes},
        date={1967},
     journal={Inventiones mathematicae},
      volume={4},
      number={1},
       pages={1\ndash 25},
}

\bib{RO2011}{article}{
      author={Rouquier, Rapha{\"e}l},
       title={Automorphismes, graduations et cat{\'e}gories triangul{\'e}es},
        date={2011},
     journal={Journal of the Institute of Mathematics of Jussieu},
      volume={10},
      number={03},
       pages={713\ndash 751},
}

\end{biblist}
\end{bibdiv}

\end{document}